\newtheorem{theorem}{Theorem}[section]
\newtheorem{lemma}[theorem]{Lemma}
\newtheorem{corollary}[theorem]{Corollary}
\numberwithin{equation}{section}
\theoremstyle{definition}
\newtheorem{definition}[theorem]{Definition}
\newcommand{\field}[1]{\mathbb{#1}}
\newcommand{\Proj}{\field{P}}
\newcommand{\R}{\field{R}}
\newcommand{\N}{\field{N}}
\newcommand{\C}{\field{C}}
\newcommand{\Cal}{\mathcal}
\begin{document}
\title[A Teichm\"uller disk with degenerate Kontsevich-Zorich spectrum]{An example of a Teichm\"uller disk in genus 4 \\with degenerate Kontsevich-Zorich spectrum}
\author{Giovanni Forni}
\address{Giovanni Forni: Department of Mathematics, University of Maryland, College Park, MD 20742-4015, USA}
\email{gforni@math.umd.edu.}
\author{Carlos Matheus}
\address{Carlos Matheus: Coll\`ege de France, 3, Rue d'Ulm, Paris, CEDEX 05, France}
\email{matheus@impa.br.}
\date{September 30, 2008.}
\begin{abstract}
We construct an orientable holomorphic quadratic differential on a Riemann surface of genus $4$ 
whose $SL(2,\mathbb{R})$-orbit is closed and has a highly degenerate Kontsevich-Zorich spectrum.
This example is related to a previous similar construction in genus $3$ by the first author. 
\end{abstract}
\maketitle
%\pageheight{8.5truein}
%\pagewidth{6.5truein}

\section{Introduction}\label{intro}
The goal of this note is the construction of an orientable quadratic differential $q$ on a Riemann surface $M$ of genus 4 such that the Kontsevich-Zorich cocycle along the $SL(2,\mathbb{R})$-orbit of $(M,q)$ has a {\it totally degenerate }Kontsevich-Zorich spectrum,in the sense that all non-trivial Lyapunov exponents of the Kontsevich-Zorich cocycle are equall to zero. A similar example in genus $3$ appeared in \cite{ForniSurvey}. Our example is given by a non-primitive Veech surface in the stratum $\Cal H(2,2,2)$ of abelian differentials with $3$ double zeros. Unpublished work of M. M\"oller implies that there are very few examples of this kind and only for abelian differentials on surfaces of genus $g\leq 5$. It is likely that the Forni's example in genus $3$ \cite{ForniSurvey} and the example presented here are the only examples of Veech surfaces with vanishing non-trivial Konstevich-Zorich exponents \cite{Moeller}.

In order to explain more precisely our results, let us recall the definition of the Teichm\"uller flow 
and the Kontsevich-Zorich cocycle.

Given a surface $M$, we denote by $\text{ \rm Diff}^+(M)$ the group of orientation-preserving diffeomorphisms of $M$ and $\text{ \rm Diff}_0^+(M)$ the connected component of the identity in $\text{ \rm Diff}^+(M)$ (i.e., $\text{ \rm Diff}_0^+(M)$ is the subset of diffeomorphisms in $\text{ \rm Diff}^+(M)$ which are isotopic to the identity).

\begin{definition}Let $Q_g$ be the \emph{Teichm\"uller} space of holomorphic quadratic differentials on a surface of genus $g\geq1$:
\begin{center}
$Q_g=\{\textrm{holomorphic quadratic differentials}\}/\text{ \rm Diff}_0^+(M)$.
\end{center}
Also, let $\mathcal{M}_g$ be the \emph{moduli} space of holomorphic quadratic differentials on a surface of genus $g\geq 1$:
\begin{center}
$\mathcal{M}_g=\{\textrm{holomorphic quadratic differentials}\}/\Gamma_g,$
\end{center}
where $\Gamma_g$ is the \emph{mapping class group} $\Gamma_g:=\text{ \rm Diff}^+(M)/\text{ \rm Diff}_0^+(M)$.
\end{definition}

Moreover, we note that the group $SL(2,\mathbb{R})$ acts naturally on $\mathcal{M}_g$ by linear transformations on the pairs of real-valued $1$-forms $\left(\textrm{Re}(q^{1/2}),\textrm{Im}(q^{1/2})\right)$. The \emph{Teichm\"uller} (geodesic) flow $G_t$ is given by the action of the diagonal subgroup $\textrm{diag}(e^{t},e^{-t})$ of $SL(2,\mathbb{R})$ on $\mathcal{M}_g$.

\smallskip
For later reference, we recall some of the main structures of the Teichm\"uller space $Q_g$ and the moduli space $\mathcal{M}_g$:

\begin{itemize}
\item $\mathcal{M}_g$ and $Q_g$ are stratified into analytic spaces $\mathcal{M}_\kappa$ and $Q_\kappa$ obtained by fixing the multiplicities $\kappa=(k_1,\dots,k_\sigma)$ of the zeroes $\{p_1,\dots,p_\sigma\}$ of the quadratic differentials (here $\sum k_i=4g-4$);
\item the total area function $A:\mathcal{M}_g\to\mathbb{R}^+$, $A(q) = \int_M |q|$ is $SL(2,\mathbb{R})$-invariant so that the unit bundle $\mathcal{M}_g^{(1)}:=A^{-1}(1)$ and its strata $\mathcal{M}_\kappa^{(1)}:= \mathcal{M}_g\cap \mathcal{M}_g^{(1)}$ are $SL(2,\mathbb{R})$-invariant (and, \emph{a fortiori}, $G_t$-invariant);
\item a stratum $\mathcal{M}_\kappa$ of {\it orientable quadratic differentials }(that is, quadratic differentials obtained as squares of holomorphic 1-forms) has a locally affine structure modeled on the cohomology $H^1(M,\Sigma_\kappa,\mathbb{C})$, relative to the zero set $\Sigma_\kappa:=\{p_1,\dots,p_\sigma\}$, with local charts are given by the period map $q\mapsto [q^{1/2}]\in H^1(M,\Sigma_\kappa,\mathbb{C})$;
\item the Lebesgue measure on the Euclidean space $H^1(M,\Sigma_\kappa,\mathbb{C})$, appropriately normalized, induces an absolutely continuous $SL(2,\mathbb{R})$-invariant measure $\mu_\kappa$ on $\mathcal{M}_\kappa$ such that the conditional measure $\mu_\kappa^{(1)}$ induced on $\mathcal{M}_\kappa^{(1)}$ is $SL(2,\mathbb{R})$-invariant (and hence $G_t$-invariant).
\end{itemize}

Once we get the existence of a good invariant measure $\mu_\kappa^{(1)}$ for the Teichm\"uller flow, it is natural to ask whether $\mu_\kappa^{(1)}$ has finite mass and/or $\mu_\kappa^{(1)}$ is ergodic with respect to the Teichm\"uller dynamics. In this direction, Veech~\cite{Veech2} showed that the strata are \emph{not} always connected. More recently, Kontsevich and Zorich~\cite{KZ} (in the orientable case) and Lanneau~\cite{Lanneau} (in the non-orientable case) gave a complete classification of the connected components of all strata of holomorphic quadratic differentials. Taking this into account, we have the following result:
\begin{theorem}[Masur~\cite{Masur1}, Veech~\cite{Veech}] The total volume of $\mu_\kappa^{(1)}$ is finite and the Teichm\"uller flow $G_t=\textrm{diag}(e^t,e^{-t})$ is ergodic on each connected component of $\mathcal{M}_\kappa$ with respect to $\mu_\kappa^{(1)}$.
\end{theorem}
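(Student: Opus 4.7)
The plan is to treat the two assertions---finite volume of $\mu_\kappa^{(1)}$ and ergodicity of the Teichm\"uller flow on each connected component---in parallel, along the lines of Veech's approach via interval exchange transformations (with Masur's alternative route via extremal length providing a backup for the finite-volume part). Invariance of $\mu_\kappa^{(1)}$ under $G_t$ is essentially free once one uses the period coordinate chart $q\mapsto [q^{1/2}]\in H^1(M,\Sigma_\kappa,\C)\simeq H^1(M,\Sigma_\kappa,\R)\oplus H^1(M,\Sigma_\kappa,\R)$: in these coordinates $G_t$ is the diagonal action $(x,y)\mapsto (e^tx,e^{-t}y)$, which preserves Lebesgue and commutes with the area normalization. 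The real content of the theorem is therefore finite mass and ergodicity.

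First I would construct a global Poincar\'e cross-section $\Sigma\subset\mathcal{M}_\kappa^{(1)}$ whose first-return map under $G_t$ can be analyzed combinatorially. Via Veech's zippered-rectangles construction, every orientable $q$ in the stratum decomposes into horizontal rectangles erected over a horizontal segment carrying an interval exchange transformation $T$ on some $d$ letters, and $G_t$ acts by stretching horizontally while contracting vertically. The first-return map of $G_t$ to the slice where the base segment has unit length is, up to a cocycle factor, the Rauzy--Veech renormalization $T\mapsto T'$. Finiteness of $\mu_\kappa^{(1)}$ then reduces to (i) existence of a Rauzy--Veech invariant probability on the simplex of normalized length vectors, obtained via compactness of the Rauzy diagram together with a Perron--Frobenius style fixed-point argument for the transfer operator, and (ii) integrability of the return-time function, which in turn reduces to the estimate that the Lebesgue measure of the set of surfaces admitting a saddle connection of holonomy at most $\epsilon$ is $O(\epsilon^2)$; one then pieces these together by Fubini in the affine chart.

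With finite volume in hand, I would prove ergodicity by a Hopf argument adapted to the affine structure. The unstable and stable foliations of $G_t$ in period coordinates are the horizontal ($\text{Re}$) and vertical ($\text{Im}$) directions, both smooth foliations with absolutely continuous holonomies. For a continuous, compactly supported observable $\varphi$ on $\mathcal{M}_\kappa^{(1)}$, Birkhoff's theorem (applicable thanks to the preceding step) yields forward and backward averages $\varphi^\pm$ that are constant along unstable and stable leaves respectively; absolute continuity of the stable holonomy forces $\varphi^+=\varphi^-$ almost everywhere, and a standard density-and-saturation argument then shows that this common value is constant on each connected component of $\mathcal{M}_\kappa$. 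The connectedness hypothesis is essential, since Veech showed that strata can be disconnected, so one cannot upgrade the conclusion to the whole stratum without the Kontsevich--Zorich/Lanneau classification. The principal difficulty I anticipate is step (ii) above: the $O(\epsilon^2)$ estimate at the cusps of $\mathcal{M}_\kappa^{(1)}$, which requires controlling how short saddle connections can appear simultaneously, and which dominates the entire finite-volume argument.
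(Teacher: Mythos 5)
First, a point of order: the paper does not prove this theorem at all. It is stated as background, attributed to Masur \cite{Masur1} and Veech \cite{Veech}, and used purely as a black box, so there is no proof in the paper to compare your sketch against. On its own terms your outline follows the general shape of the literature (Veech's zippered rectangles and Rauzy--Veech renormalization as a return map for $G_t$, Masur's compactness argument as an alternative for finiteness, a Hopf-type argument for ergodicity), which is the right landscape.

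There is, however, a genuine error in step (i) of your finite-volume argument. The Rauzy--Veech renormalization on the simplex of normalized length vectors does \emph{not} admit an absolutely continuous invariant \emph{probability}: Veech proved that its natural absolutely continuous invariant measure is infinite (conservative and ergodic, but of infinite mass), and this is precisely what later led Zorich to introduce an accelerated induction having a finite invariant measure. A Perron--Frobenius fixed-point argument cannot produce the probability you want (and ``compactness of the Rauzy diagram'' is vacuous --- the diagram is a finite graph; the obstruction is escape of mass to the boundary of the simplex). So finiteness of $\mu_\kappa^{(1)}$ cannot be routed through an invariant probability for the induction; Veech obtains it by directly estimating the volume integral in zippered-rectangle coordinates, and in the modern treatment it follows from the cusp estimate you list as (ii), which then carries the entire load. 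Two further, smaller issues: in the Hopf argument the \emph{forward} Birkhoff average is constant along \emph{stable} leaves and the backward average along unstable leaves (you have the pairing reversed), and the ``density-and-saturation'' step is where the real work lies, since the affine foliations exist only in local period charts and one must chain local product neighborhoods across an entire connected component --- this is exactly the part Masur and Veech handle through the connection with interval exchange transformations rather than by a soft argument. Finally, the theorem as stated concerns arbitrary strata of quadratic differentials; for non-orientable ones the period coordinates live on the orientation double cover, a case your argument as written does not address.
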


In order to analyze the Lyapunov spectrum (i.e., the collection of the Lyapunov exponents) of the Teichm\"uller flow, Kontsevich and Zorich~\cite{K} introduced the following notion: the \emph{Kontsevich-Zorich} cocycle $G_t^{KZ}$ is the quotient of the trivial cocycle $G_t\times id: Q_g\times H^1(M,\mathbb{R})\to Q_g\times H^1(M,\mathbb{R})$ with respect to the action of the mapping class group $\Gamma_g$. It is known that the cocycle $G_t^{KZ}$ is \emph{symplectic}, so that the Lyapunov spectrum of $G_t^{KZ}$ with respect to any $G_t$-invariant ergodic probability $\mu$ is \emph{symmetric}:
\begin{equation*}
1=\lambda_1^\mu\geq\dots\lambda_g^\mu\geq 0\geq-\lambda_g^\mu\geq\dots\geq-\lambda_1^\mu=-1.
\end{equation*}
It turns out that the $g$ numbers $1=\lambda_1^\mu\geq \lambda_2^\mu\geq\dots\geq\lambda_g^\mu$ appearing in the non-negative part of the Kontsevich-Zorich spectrum determines the Lyapunov spectrum of the Teichm\"uller flow (this is one of the motivation for introducing the Kontsevich-Zorich cocycle). Indeed, it is possible to show that, for any ergodic probability measure $\mu$ on $\mathcal{M}_g$ supported on a stratum of orientable quadratic differentials with $\sigma\in \N$ 
distinct zeros, the Lyapunov spectrum of the Teichm\"uller flow  is
\begin{equation*}
\begin{aligned}
2&=(1+\lambda_1^\mu)\geq \dots\geq (1+\lambda_g^{\mu})\geq \overbrace{1=\dots=1}^{\sigma-1}\geq
(1-\lambda_g^\mu) \geq \dots \\ &\geq (1-\lambda_2^\mu)\geq 0\geq-(1-\lambda_2^\mu)\geq\dots\geq-(1-\lambda_g^\mu) \\
&\geq \underbrace{-1=\dots=-1}_{\sigma-1} \geq-(1+\lambda_g^\mu)\geq \dots 
\geq-(1+\lambda_1^\mu)=-2.
\end{aligned}
\end{equation*}

On the other hand, concerning the Lyapunov spectrum of the Kontsevich-Zorich cocycle, Zorich and Kontsevich conjectured that the Lyapunov exponents of $G_t^{KZ}$ for the canonical absolutely continuous measure $\mu_\kappa^{(1)}$ on any stratum of orientable quadratic differentials 
are all non-zero (i.e., non-uniform hyperbolicity) and distinct (i.e., all Lyapunov exponents have multiplicity $1$). After the fundamental works of G. Forni~\cite{Forni} (showing the non-uniform hyperbolicity of $G_t^{KZ}$) and Avila, Viana~\cite{AV} (proving the simplicity of the Lyapunov spectrum), it follows that Zorich-Kontsevich conjecture is true. In other words, the Lyapunov exponents of a $\mu_\kappa^{(1)}$-generic point are all non-zero and they have multiplicity $1$. However, it remains to understand the dynamical behavior under $G_t^{KZ}$ of the non-generic orbits (with respect to $\mu_\kappa^{(1)}$). In particular, one can follow Veech and ask how ``degenerate'' the Lyapunov spectrum of $G_t^{KZ}$ can be along a non-typical orbit. This question was first answered by G. Forni~\cite{ForniSurvey} who exhibited an example of an orientable  holomorphic quadratic 
differential $q$ on a Riemann surface $M$ of genus $g=3$ such that the Kontsevich-Zorich
spectrum of the $SL(2,\mathbb{R})$-invariant measure $\mu$ supported on the (closed) $SL(2,\mathbb{R})$-orbit of $(M,q)$ verifies $\lambda_2^\mu=\lambda_3^\mu=0$.

\smallskip
At this point, we are able to state our main result:

\begin{theorem}\label{t.FM}There exists an orientable holomorphic quadratic differential $q$ on a Riemman surface $M$ of genus 4 such that the Lyapunov exponents (with respect to $G_t^{KZ}$) of the $SL(2,\mathbb{R})$-invariant probability $\mu$ supported on the $SL(2,\mathbb{R})$-orbit of $(M,q)$ verifies
$$\lambda_2^{\mu}=\lambda_3^{\mu}=\lambda_4^{\mu}=0.$$
\end{theorem}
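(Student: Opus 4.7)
My plan follows closely the strategy of Forni's genus $3$ example. First I would construct an explicit translation surface $(M,\omega)$ of genus $4$ in $\Cal H(2,2,2)$ (so $q=\omega^{2}$) as a ramified cover $\pi\colon M\to M_{0}$ of a smaller translation surface $M_{0}$, where the cover carries a large deck/automorphism group $G$ acting on $M$ by translation-structure automorphisms. Natural candidates are a finite abelian cover of Forni's genus $3$ example, or an origami over the torus designed so that $M/G$ has genus $1$ and the three double zeros of $\omega$ lie over a common point. The design requirement is that $G$ act on $H^{1,0}(M)$ in such a way that the line $\C\omega$ exhausts the $G$-trivial isotypic component.

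Next I would verify that the resulting surface is Veech. For an origami this is a finite computation of the Veech group inside $SL(2,\Z)$ via horizontal/vertical cylinder decompositions, and it is automatically a lattice. Together with Masur--Veech, this gives that the $SL(2,\R)$-orbit of $(M,q)$ is closed in $\M_{g}$ and supports a unique $SL(2,\R)$-invariant probability $\mu$, so the Lyapunov spectrum of $G_{t}^{KZ}$ with respect to $\mu$ is well defined.

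The heart of the argument is the vanishing of the non-tautological exponents. Since $G$ commutes both with the $SL(2,\R)$-action on $(M,\omega)$ and with parallel transport, it acts on the Kontsevich-Zorich cocycle and every $G$-isotypic component of the Hodge bundle $H^{1}_{\R}$ is $G_{t}^{KZ}$-invariant. The symplectic splitting $H^{1}_{\R}=V_{\text{taut}}\oplus V^{\perp}$, with $V_{\text{taut}}=\R\langle \mathrm{Re}\,\omega,\mathrm{Im}\,\omega\rangle$ carrying the exponents $\pm 1$, leaves a symplectic $6$-dimensional bundle $V^{\perp}$ whose spectrum is the target $\pm\lambda_{2},\pm\lambda_{3},\pm\lambda_{4}$. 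To force these to vanish I would invoke Forni's variational formula, which expresses the Lyapunov exponents of $G_{t}^{KZ}$ on any holomorphic subbundle $V^{1,0}\subset H^{1,0}$ as integrals of a Hermitian `second fundamental form' $B_{V}$ against $\mu$. The form $B_{V}$ is built from products of holomorphic sections with $\omega^{-1}$, integrated against $|\omega|^{2}$, so its values project into the $G$-trivial part of $H^{1,0}$. If the construction has been engineered so that $V^{\perp}\cap H^{1,0}$ contains no copy of the trivial representation, then $B_{V^{\perp}}\equiv 0$ pointwise, the cocycle on $V^{\perp}$ is isometric for the Hodge norm, and the desired equalities $\lambda_{2}^{\mu}=\lambda_{3}^{\mu}=\lambda_{4}^{\mu}=0$ follow.

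The main obstacle is precisely this combinatorial engineering of $G$: one must exhibit an abelian differential of genus $4$ with a group of automorphisms whose representation on the $4$-dimensional space $H^{1,0}(M)$ decomposes as the trivial character (on $\C\omega$) plus three non-trivial characters exhausting $V^{\perp}\cap H^{1,0}$. Checking this decomposition requires an explicit basis of holomorphic $1$-forms adapted to $\pi$, or equivalently an Eichler trace formula computation on the cover. Dimension and character constraints make such a configuration increasingly unlikely as $g$ grows, which matches M\"oller's claim that only finitely many examples exist for $g\le 5$.
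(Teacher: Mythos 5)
Your outline reproduces the paper's strategy in broad strokes (a highly symmetric cover, Forni's formula reducing $\lambda_2^\mu=\lambda_3^\mu=\lambda_4^\mu=0$ to the vanishing of the non-tautological block of the form $B$ along the orbit, and a closed orbit coming for free from the covering construction), but two genuine gaps remain. The first is that the example is never produced: the theorem is an existence statement and its entire content is the specific surface, which you explicitly defer as ``the main obstacle.'' The paper's example is the cyclic cover $M_6(1,1,1,3)$, i.e.\ the genus $4$ curve $w^6=(z-x_1)(z-x_2)(z-x_3)(z-x_4)^3$, with $q$ the pullback of the unique quadratic differential on $\Proj^1(\C)$ with simple poles at the four branch points; Bouw's formula for the eigenspaces $L_i$ of the order-$6$ deck transformation $T$ gives $\dim L_3=\dim L_4=1$, $\dim L_5=2$ and all others zero, and closedness of the $SL(2,\R)$-orbit is immediate because the $4$-simple-pole stratum on the sphere is a single $SL(2,\R)$-orbit --- no Veech-group computation is required.

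The second gap is in your vanishing criterion, and it would actually fail on the very example that works. The entry $B(\theta_i,\theta_j)=\int_M m_i^+m_j^+\,\omega_q$ (with $m_i^+=\theta_i/\omega$) transforms under $a\in\text{Aut}(q)$ by the product $u_i(a)u_j(a)$ of eigenvalues on $\mathfrak{M}_q^+$, so the symmetry lemma recalled in Section 3 (Lemma 7.2 of \cite{ForniSurvey}) kills it only when $u_iu_j\neq 1$: this is a condition on \emph{pairs} of characters, not on individual ones, and requiring merely that $V^\perp\cap H^{1,0}$ contain no trivial subrepresentation does not exclude two non-trivial characters that are mutually inverse. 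Concretely, the translation subgroup of the example (generated by $T^2$, which fixes $\omega$ and exhibits $M$ as a degree-$3$ branched cover of an elliptic curve) acts on the non-tautological part of $H^{1,0}$ with characters $\zeta,\zeta^2,\zeta^2$, where $\zeta$ is a primitive cube root of unity: none is trivial, yet $\zeta\cdot\zeta^2=1$, so your criterion leaves a possibly non-zero off-diagonal entry of $B$ and cannot bound $\textrm{rank}(H_q)$ by $1$. One must instead use the full order-$6$ automorphism $T$, which satisfies $T^*\omega=-\omega$ (so it preserves $q$ but is not a translation); its eigenvalues on $\mathfrak{M}_q^+$ are $1,-\varepsilon^4,-\varepsilon^5,-\varepsilon^5$, every pairwise product among the last three differs from $1$, and Corollary~\ref{c1} then gives $q\in\mathcal{R}_4^{(1)}(1)$, whence the conclusion via Corollary~\ref{c}.
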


The organization of this note is the following: in \S 2, we will recall Forni's version of the Kontsevich-Zorich formula for the sum of the Lyapunov exponents of $G_t^{KZ}$, in \S 3 we review Forni's method
\cite{ForniSurvey} to exlpoit symmetries and, in section \S 4, we will complete the proof of our theorem.

\smallskip
{\bf Acknowledgements:}  We would like to thank J.-C. Yoccoz,  who motivated our work by asking whether the example in \cite{ForniSurvey} could be generalized. We are also very grateful to 
M. M\"oller who introduced us to the notion of a cyclic cover, directed us to the reference \cite{B} and suggested the correct algebraic equation of our example.

\section{The Kontsevich-Zorich formula revisited}

Let $q$ be a holomorphic quadratic differential on a Riemann surface $M$ of genus $g\geq 2$. Fix $z=x+iy$ a holomorphic local coordinate and write $q=\phi(z)dz^2$. It follows that the (degenerate) Riemannian metric $R_q$ and the area form $\omega_q$ induced by $q$ are $R_q=|\phi(z)|^{1/2}(dx^2+dy^2)^{1/2}$, $\omega_q=|\phi(z)|dx\wedge dy$. Denote by $S=\partial/\partial x$ and $T=\partial/\partial y$ the horizontal and vertical directions. Define $L^2_q(M):=L^2(M,\omega_q)$ the space of complex-valued square-integrable functions and $H^1_q(M)$ the Sobolev space of functions $v\in L^2_q(M)$ such that $Sv,Tv\in L^2_q(M)$.

\begin{lemma}[Forni~\cite{Forni1}, prop. 3.2]The Cauchy-Riemann operators $\partial^{\pm}:=\frac{S\pm iT}{2}$ with (dense) domain $H^1_q(M)\subset L^2_q(M)$ are closed. Morever, $\partial^{\pm}$ has closed range of finite codimension (equal to the genus $g$ of $M$). Furthermore, denoting by $\mathfrak{M}_q^{\pm}\subset L^2_q(M)$ the subspaces of meromorphic (resp. anti-meromorphic) functions, we have the orthogonal decompositions:
$$L^2_q(M)=\textrm{Ran}(\partial_q^+)\oplus \mathfrak{M}_q^- = \textrm{Ran}(\partial_q^-)\oplus \mathfrak{M}_q^+.$$
\end{lemma}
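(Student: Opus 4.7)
My plan is to establish the three assertions for $\partial^+$, the case of $\partial^-$ being entirely symmetric under complex conjugation.

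First, I would dispose of closedness, which is almost tautological: by definition $H^1_q(M)$ consists of those $u\in L^2_q(M)$ whose distributional derivatives $Su, Tu$ -- computed in flat coordinates on $M\setminus\Sigma$, where $\Sigma$ is the finite zero set of $q$ -- lie in $L^2_q(M)$. This is a Hilbert space in the graph norm of $\partial^+$, which is precisely the statement that the operator is closed. Next, I would identify the kernel: a distributional $L^2_q$-solution of $\partial^+ u = 0$ is holomorphic on $M\setminus\Sigma$ and globally square integrable, hence belongs to $\mathfrak{M}_q^+$. Working on the orientation double cover if needed, the map $u\mapsto u\cdot q^{1/2}$ identifies $\mathfrak{M}_q^+$ with a space of holomorphic $1$-forms on $M$: the $L^2_q$-integrability of $u$ near a zero of $q$ of order $k_i$ forces any pole of $u$ to be exactly compensated by the vanishing of $q^{1/2}$, so that $u\cdot q^{1/2}$ is globally holomorphic. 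By Riemann-Roch this space has dimension $g$.

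Next, I would establish the orthogonal decomposition by identifying the adjoint. For $u\in H^1_q(M)$ and $\psi\in L^2_q(M)$, a cutoff-and-Green's-formula argument (the singularities of $|q|$ are integrable, and the boundary contributions on small circles around the zeros vanish in the limit) gives $\langle \partial^+ u, \psi\rangle = -\langle u, \partial^-\psi\rangle$ distributionally. Hence $(\partial^+)^* = -\partial^-$ on appropriate domains, and $\psi\perp\text{Ran}(\partial^+)$ if and only if $\partial^-\psi = 0$ distributionally, if and only if $\psi\in\mathfrak{M}_q^-$. This yields the orthogonal splitting $L^2_q(M) = \overline{\text{Ran}(\partial^+)}\oplus\mathfrak{M}_q^-$, with the second summand of dimension $g$.

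The main obstacle will be showing that $\text{Ran}(\partial^+)$ is actually closed, not merely that its closure has codimension $g$. Equivalently, I need a Poincar\'e-type estimate $\|u\|_{L^2_q}\leq C\|\partial^+ u\|_{L^2_q}$ for $u\in H^1_q(M)$ orthogonal to $\mathfrak{M}_q^+$. Because the flat metric $|q|$ has conical singularities at $\Sigma$, the standard Fredholm theory for elliptic operators on smooth compact surfaces does not apply verbatim. My approach is to localize: on $M\setminus\Sigma$, $\partial^+$ is the usual Cauchy-Riemann operator on a flat disc and the classical interior elliptic estimates apply; near each conical point, one uses weighted Sobolev estimates adapted to the model singularity. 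Patching these together with a partition of unity gives a global Garding-type inequality $\|u\|_{H^1_q}\leq C(\|\partial^+ u\|_{L^2_q}+\|u\|_{L^2_q})$. Since $M$ has finite area and the singular weight is integrable, the embedding $H^1_q(M)\hookrightarrow L^2_q(M)$ is compact, and a standard Peetre-type lemma then upgrades this to the desired coercive estimate modulo a finite-dimensional subspace. That subspace must coincide with $\ker\partial^+ = \mathfrak{M}_q^+$, yielding closed range of codimension $g$ and completing the proof.
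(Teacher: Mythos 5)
The paper itself offers no proof of this lemma --- it is imported verbatim from \cite{Forni1}, Proposition 3.2 --- so I am comparing your sketch with the argument in that reference. Your overall architecture is the standard one and essentially matches Forni's: closedness from the definition of $H^1_q(M)$ as the graph-norm domain; identification of $\textrm{Ran}(\partial^\pm_q)^\perp$ with $\mathfrak{M}_q^{\mp}$ via the formal adjoint relation $\langle \partial^+u,\psi\rangle=-\langle u,\partial^-\psi\rangle$, Weyl's lemma, and the observation that $L^2_q$-integrability at a zero of $q$ of order $k$ caps the pole order so that $m\cdot q^{1/2}$ is holomorphic; and a compactness argument for closedness of the range. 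The main difference is that Forni does not need your partition-of-unity/weighted-estimate machinery near the cone points: he first proves the identity $\|\partial^+_q u\|_q=\|\partial^-_q u\|_q$ for all $u\in H^1_q(M)$ by integration by parts (the commutator term integrates to zero on a closed surface), which together with the conformal invariance of the Dirichlet integral reduces everything to the smooth Rellich theorem and a Poincar\'e inequality. Your route should work, but it is doing by hand what that identity gives for free.

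There is, however, one genuine error in your final assembly. The kernel of $\partial^+$ \emph{on the stated domain} $H^1_q(M)$ is only the constants, not $\mathfrak{M}_q^+$: a meromorphic $m\in\mathfrak{M}_q^+$ with an actual pole of order $j\geq 1$ at a zero of $q$ of order $k$ satisfies $\int|m|^2\omega_q<\infty$, but its derivative along $S$ (computed in the flat coordinate) behaves like $|z|^{-j-1-k/2}$, so $\int|Sm|^2\omega_q\sim\int|z|^{-2j-2}\,dx\,dy=\infty$; such $m$ lies in $L^2_q(M)$ but not in $H^1_q(M)$. Consequently your closing sentence, which sets the Peetre defect subspace equal to ``$\ker\partial^+=\mathfrak{M}_q^+$'' and deduces ``closed range of codimension $g$'' from it, is wrong twice over: the defect subspace is the constants, and in any case the codimension of the range is $\dim\ker$ of the \emph{adjoint}, not of the operator (the index here is $1-g$, not $0$). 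None of this sinks the proof --- Peetre's lemma only needs \emph{some} finite-dimensional kernel to yield closed range, and you already obtained $\textrm{Ran}(\partial^+_q)^\perp=\mathfrak{M}_q^-$ correctly in your second paragraph --- but the conflation of the $H^1_q$-kernel with the distributional kernel of the maximal extension should be repaired before this could be called a proof.
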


Denote by $\pi_q^{\pm}:L^2_q(M)\to\mathfrak{M}_q^{\pm}$ the orthogonal projection. Let $H_q$ be the (non-negative definite) Hermitian form on $\mathfrak{M}_q^+\subset L^2_q(M)$ given by
$$H_q(m_1^+,m_2^+):=(\pi_q^-(m_1^+),\pi_q^-(m_2^+))_q:=\int_M \pi_q^-(m_1^+)\cdot\overline{\pi_q^-(m_2^+)} \omega_q$$
for all $m_1^+,m_2^+\in\mathfrak{M}_q$. Let
$$1\equiv\Lambda_1(q)\geq \Lambda_2(q)\geq\dots\geq\Lambda_g(q)\geq 0$$
be the eigenvalues of $H_q$.

Next, we consider the sets
$$\mathcal{R}_g^{(1)}(k):=\{q\in\mathcal{M}_g^{(1)}: \Lambda_{k+1}(q)=\dots=\Lambda_g(q)=0\}.$$

\begin{definition}The set $\mathcal{R}_g^{(1)}(g-1)$ is called the \emph{determinant locus}.
\end{definition}

The relevance of the natural filtration of sets $\mathcal{R}_g^{(1)}(1)\subset\dots\subset\mathcal{R}_g^{(1)}(g-1)$ for the study of the Lyapunov spectrum of $G_t^{KZ}$ becomes evident from the following version of a formula by Kontsevich and
Zorich \cite{K} for the sum of Lyapunov exponents:

\begin{theorem}[Forni~\cite{Forni}, Corollary 5.3]\label{t.formula}Let $\mu$ be a $SL(2,\mathbb{R})$-invariant ergodic probability measure on $\mathcal{M}_g^{(1)}$. Then, the Lyapunov exponents of $G_t^{KZ}$ with respect to $\mu$ satisfy the formula:
$$\lambda_1^{\mu}+\dots+\lambda_g^\mu = \int_{\mathcal{M}_g^{(1)}}(\Lambda_1(q)+\dots+\Lambda_g(q)) d\mu(q).$$
In particular, since $\lambda_1^\mu=1\equiv\Lambda_1(q)$, we have
$$\lambda_2^{\mu}+\dots+\lambda_g^\mu = \int_{\mathcal{M}_g^{(1)}}(\Lambda_2(q)+\dots+\Lambda_g(q)) d\mu(q).$$
\end{theorem}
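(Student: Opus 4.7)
The plan is to follow Forni's approach \cite{Forni}: realize $\lambda_1^\mu+\cdots+\lambda_g^\mu$ as the expected exponential growth rate of the Hodge norm on the determinant line of the Hodge subbundle $H^{1,0}$ under the Kontsevich-Zorich cocycle, compute this rate via a variation-of-Hodge-structure calculation along the Teichm\"uller flow, and identify the resulting integrand with $H_q$.

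Equip the real Hodge bundle over $\mathcal{M}_g^{(1)}$ with the Hodge inner product $\|\cdot\|_q$ determined by the Hodge decomposition $H^1(M,\mathbb{C})=H^{1,0}_q\oplus H^{0,1}_q$ of the underlying Riemann surface $M$. The cocycle $G_t^{KZ}$ is Gauss-Manin parallel transport; it preserves the intersection symplectic form but not the Hodge decomposition. Because $H^{1,0}$ is a Lagrangian subbundle on which the Hodge form is positive, and the Kontsevich-Zorich spectrum is symmetric around zero, the sum of the top $g$ exponents equals the Hodge norm-growth rate on $\det H^{1,0}$: for $\mu$-a.e.\ $q$,
\[
\lambda_1^\mu+\cdots+\lambda_g^\mu \;=\; \lim_{t\to\infty}\frac{1}{t}\log\bigl\|\wedge^g G_t^{KZ}(\eta_q)\bigr\|_{G_tq},
\]
where $\eta_q$ is any unit vector in $\det H^{1,0}_q$.

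Computing the variation of the Hodge metric along the Teichm\"uller flow (going back to Kontsevich \cite{K}), one rewrites this limit, by $G_t$-ergodicity of $\mu$ and the Birkhoff ergodic theorem, as $\int_{\mathcal{M}_g^{(1)}}\mathrm{tr}(\mathcal{B}_q)\,d\mu(q)$, where $\mathcal{B}_q=A_q^*A_q$ is the positive self-adjoint endomorphism of $H^{1,0}_q$ built from the second fundamental form $A_q\colon H^{1,0}_q\to H^{0,1}_q$ of the inclusion $H^{1,0}\subset H^1\otimes\mathbb{C}$ (under the Gauss-Manin connection, contracted with the Teichm\"uller flow direction) and its Hodge-metric adjoint $A_q^*$. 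The proof thus reduces to the pointwise identity $\mathrm{tr}(\mathcal{B}_q)=\Lambda_1(q)+\cdots+\Lambda_g(q)$.

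For orientable $q=\omega_0^2$ one has an isomorphism $H^{1,0}_q\cong\mathfrak{M}_q^+\cdot\omega_0$ via $m\mapsto m\,\omega_0$, and the Teichm\"uller infinitesimal deformation at $q$ is generated by the Beltrami differential $\bar\omega_0/\omega_0$. A Kodaira-Spencer type computation shows that the $H^{0,1}$-component of the Teichm\"uller variation of $m\,\omega_0$ is $\pi_q^-(m)\,\bar\omega_0$, so $A_q$, under the above isomorphism, is essentially the restriction of $\pi_q^-$ to $\mathfrak{M}_q^+$. Consequently $\mathcal{B}_q$ corresponds to the operator on $\mathfrak{M}_q^+$ whose associated Hermitian form is exactly $H_q$, giving $\mathrm{tr}(\mathcal{B}_q)=\Lambda_1(q)+\cdots+\Lambda_g(q)$. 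The main obstacle is precisely this last identification: it requires carefully translating the variation of the complex structure into an operator on meromorphic functions, and relies essentially on the $L^2_q$-duality between the Cauchy-Riemann operators $\partial^\pm_q$ and the orthogonal projections $\pi_q^\pm$ supplied by Lemma~2.1.
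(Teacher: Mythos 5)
The paper itself offers no proof of this statement: it is imported verbatim as Corollary 5.3 of \cite{Forni} and used as a black box, so the only thing to assess is whether your sketch would actually reconstruct that proof. Your outline does follow the Kontsevich--Forni strategy (Hodge norm on $\det H^{1,0}$, second fundamental form of the Hodge subbundle, identification of $A_q$ with $\pi_q^-$ restricted to $\mathfrak{M}_q^+$), and the final identification $\mathrm{tr}(\mathcal{B}_q)=\Lambda_1(q)+\dots+\Lambda_g(q)$ is consistent with the identity $H(q)=\overline{B^a(q)}\,B^a(q)$ that the paper invokes later in the proof of Corollary \ref{c1}. But there is a genuine gap at the central analytic step.

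The gap is the sentence ``by $G_t$-ergodicity of $\mu$ and the Birkhoff ergodic theorem, one rewrites this limit as $\int \mathrm{tr}(\mathcal{B}_q)\,d\mu(q)$.'' The Birkhoff theorem applied to $\frac{d}{dt}\log\|\wedge^g G_t^{KZ}\eta_q\|$ yields the $\mu$-integral of the \emph{first} radial derivative of the log-Hodge-volume along the flow direction; this is a first-order quantity, essentially the real part of the bilinear form $B_q$ contracted with the geodesic direction, and it is not pointwise equal to the second-order, sign-definite quantity $\mathrm{tr}(A_q^*A_q)=\Lambda_1(q)+\dots+\Lambda_g(q)$. The equality of the two integrals is obtained only after averaging over the $SO(2,\mathbb{R})$-orbit of $q$ (equivalently, by computing the hyperbolic Laplacian of the log-Hodge-volume over the Teichm\"uller disk and using the harmonicity/mean-value argument of \cite{K} and \cite{Forni}), and this is precisely where the hypothesis that $\mu$ is $SL(2,\mathbb{R})$-invariant, rather than merely $G_t$-invariant, enters. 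As written, your argument never uses $SL(2,\mathbb{R})$-invariance, so it would ``prove'' the formula for arbitrary $G_t$-invariant ergodic measures, which is not what Corollary 5.3 of \cite{Forni} asserts. A secondary, smaller gap: the claim that the specific Lagrangian $\det H^{1,0}_q$ (as opposed to an Oseledets-generic one) realizes the top sum $\lambda_1^\mu+\dots+\lambda_g^\mu$ requires justification; in Forni's treatment this too is absorbed into the averaging and two-sided estimates rather than asserted outright.
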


A direct consequence of this formula is:
\begin{corollary}[Forni~\cite{ForniSurvey}, Corollary 7.1]
\label{c}
Let $\mu$  be any $SL(2,\mathbb{R})$-invariant ergodic probability measure  on $\mathcal{M}_g^{(1)}$
supported on a stratum of orientable quadratic differentials.  The measure $\mu$ is supported on the locus $\mathcal{R}_g^{(1)}(1)$ if and only if the non-trivial Kontsevich-Zorich spectrum vanishes, that is,
$$
\lambda_2^\mu=\dots=\lambda_g^\mu=0 \,.
$$
\end{corollary}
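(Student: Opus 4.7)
The plan is to derive the corollary as a fairly direct consequence of Theorem \ref{t.formula}, using the fact that both sides of the formula are sums of nonnegative quantities. The only real ingredients beyond that formula are: (a) the numbers $\lambda_2^\mu,\dots,\lambda_g^\mu$ are nonnegative (this is just the symplectic symmetry of the Kontsevich--Zorich spectrum recalled earlier, combined with the ordering $\lambda_2^\mu\geq\dots\geq\lambda_g^\mu$), (b) each $\Lambda_i(q)$ is a nonnegative function of $q$ (it is an eigenvalue of the nonnegative Hermitian form $H_q$), and (c) the functions $q\mapsto\Lambda_i(q)$ depend continuously on $q$, so the set $\mathcal{R}_g^{(1)}(1)=\{q:\Lambda_2(q)=\dots=\Lambda_g(q)=0\}$ is a closed subset of $\mathcal{M}_g^{(1)}$.

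For the ``only if'' direction I would argue as follows. Assume $\mu$ is supported on $\mathcal{R}_g^{(1)}(1)$. Then $\Lambda_i(q)=0$ for $i=2,\dots,g$ on a set of full $\mu$-measure, so the integrand on the right-hand side of Theorem \ref{t.formula} vanishes $\mu$-almost everywhere. Hence $\lambda_2^\mu+\dots+\lambda_g^\mu=0$, and since each summand is nonnegative, every $\lambda_i^\mu$ with $i\geq 2$ must vanish.

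For the converse, assume $\lambda_2^\mu=\dots=\lambda_g^\mu=0$. Again by Theorem \ref{t.formula},
\[
\int_{\mathcal{M}_g^{(1)}}\bigl(\Lambda_2(q)+\dots+\Lambda_g(q)\bigr)\,d\mu(q)=0.
\]
The integrand is a nonnegative continuous function, so it vanishes $\mu$-almost everywhere, and by the ordering $\Lambda_2\geq\dots\geq\Lambda_g\geq 0$ each $\Lambda_i$ ($i\geq 2$) vanishes $\mu$-a.e. Thus $\mu(\mathcal{R}_g^{(1)}(1))=1$, and since $\mathcal{R}_g^{(1)}(1)$ is closed by the continuity of the $\Lambda_i$, the support of $\mu$ is contained in $\mathcal{R}_g^{(1)}(1)$.

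There is no real obstacle here: the entire content of the corollary is already packaged into Theorem \ref{t.formula}, and the argument is a standard ``nonnegative integrand with zero integral vanishes a.e.'' combined with continuity to upgrade ``a.e.'' to ``support.'' The only point meriting a line of explanation is the assumption that $\mu$ is supported on a stratum of orientable quadratic differentials, which is what allows us to identify the top Lyapunov exponent $\lambda_1^\mu$ with $1$ (equivalently $\Lambda_1(q)\equiv 1$) and hence to pass from the full formula for $\lambda_1^\mu+\dots+\lambda_g^\mu$ to the reduced formula for $\lambda_2^\mu+\dots+\lambda_g^\mu$ used above.
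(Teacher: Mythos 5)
Your argument is correct and is exactly the route the paper intends: it presents Corollary~\ref{c} as ``a direct consequence'' of Theorem~\ref{t.formula}, namely the observation that both $\lambda_2^\mu+\dots+\lambda_g^\mu$ and the integrand $\Lambda_2(q)+\dots+\Lambda_g(q)$ are sums of nonnegative terms, so one side vanishes identically ($\mu$-a.e.) if and only if the other does. Your additional remarks on the continuity of the $\Lambda_i$ (to pass from ``$\mu$-a.e.'' to ``on the support'') and on the role of orientability simply make explicit what the paper leaves implicit.
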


\section{Symmetries}
In this section we recall the simple method developed in \cite{ForniSurvey} to derive bounds
on the rank of the matrix $H_q$ from {\it symmetriesÊ}of the orientable holomorphic 
quadratic differential $q$. Let $\text{ \rm Aut}(M)$ be the goup of holomorphic automorphism of 
the Riemann surface $M$. Let $\text{ \rm Aut}(q)\subset \text{ \rm Aut}(M)$ is the subgroup formed by automorphisms $a\in \text{ \rm Aut}(M)$ such that $a^*(q)=q$. Note that there is a natural unitary action of $\text{ \rm Aut}(q)$ on $\mathfrak{M}_q^+$ (by pull-back). Given $a\in \text{ \rm Aut}(q)$, fix $\{m_1^+(a),\dots, m_g^+(a)\}$ an orthonormal basis of eigenvectors and denote by $\{u_1(a),\dots,u_g(a)\}$ the associated eigenvalues. Let $B^a(q)$ be the matrix of the operator $\pi_q^-:\mathfrak{M}_q^+\to\mathfrak{M}_q^-$ with respect to the basis $\{m_1^+(a),\dots,m_g^+(a)\}\subset\mathfrak{M}_q^+$ and $\{\overline{m_1^+(a)},\dots,\overline{m_g^+(a)}\}\subset\mathfrak{M}_q^-$:
$$B^a_{ij}(q) = \int_M m_i^+(a)m_j^+(a)\omega_q.$$
For any $I,J\subset\{1,\dots,g\}$ with $\#I=\#J$, denote by $\det B_{IJ}^a$ the minor of the matrix $B^a(q)$ with entries $B_{ij}^a(q)$, $i\in I, j\in J$.

\begin{lemma}[Forni~\cite{ForniSurvey}, Lemma 7.2] The following holds:
$$\prod\limits_{i\in I}\prod\limits_{j\in J} u_i(a)u_j(a)\neq 1 \Longrightarrow \det B_{IJ}^a(q)=0.$$
\end{lemma}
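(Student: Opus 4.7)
\medskip

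\noindent\textbf{Proof plan.} The strategy is to use the fact that $a^{*}$ acts as a unitary operator on $L^{2}_{q}(M)$ to derive a selection rule on the entries $B_{ij}^{a}(q)$, and then read off the vanishing of a minor from that rule.

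\medskip

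The first step is to show that $a^{*}$ commutes with $\pi_{q}^{-}$. Since $a^{*}q=q$, we have $a^{*}\omega_{q}=\omega_{q}$, so $a^{*}\colon L^{2}_{q}(M)\to L^{2}_{q}(M)$ is unitary. Holomorphicity of $a$ gives $a^{*}\mathfrak{M}_{q}^{\pm}\subset \mathfrak{M}_{q}^{\pm}$, and a unitary operator preserving $\mathfrak{M}_{q}^{-}$ automatically preserves its orthogonal complement $\mathrm{Ran}(\partial_{q}^{+})$; hence $a^{*}\circ\pi_{q}^{-}=\pi_{q}^{-}\circ a^{*}$. Note that this argument sidesteps the fact that $a^{*}$ does \emph{not} literally commute with $\partial_{q}^{+}$, only with the orthogonal decomposition it induces. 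Unitarity of $a^{*}|_{\mathfrak{M}_{q}^{+}}$ forces $|u_{i}(a)|=1$, and since pull-back commutes with complex conjugation one gets the dual eigenvalue relation
\[
a^{*}\,\overline{m_{j}^{+}(a)}=\overline{u_{j}(a)}\,\overline{m_{j}^{+}(a)}=u_{j}(a)^{-1}\,\overline{m_{j}^{+}(a)}.
\]

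\medskip

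For the second step I would first check, using orthonormality of $\{\overline{m_{k}^{+}(a)}\}$ in $\mathfrak{M}_{q}^{-}$ and $\mathrm{Ran}(\partial_{q}^{+})\perp \mathfrak{M}_{q}^{-}$, that the integral defining $B_{ij}^{a}(q)$ is exactly the coefficient of $\overline{m_{j}^{+}(a)}$ in the expansion
\[
\pi_{q}^{-}(m_{i}^{+}(a))=\sum_{j}B_{ij}^{a}(q)\,\overline{m_{j}^{+}(a)}.
\]
Apply $a^{*}$ to both sides: the left-hand side becomes $u_{i}(a)\,\pi_{q}^{-}(m_{i}^{+}(a))$ by commutation with $\pi_{q}^{-}$, and the right-hand side becomes $\sum_{j} u_{j}(a)^{-1}B_{ij}^{a}(q)\,\overline{m_{j}^{+}(a)}$. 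Comparing coefficients yields
\[
\bigl(u_{i}(a)u_{j}(a)-1\bigr)\,B_{ij}^{a}(q)=0,
\]
so $B_{ij}^{a}(q)=0$ whenever $u_{i}(a)u_{j}(a)\neq 1$.

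\medskip

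The third step is pure linear algebra. Expanding the minor over bijections $\sigma:I\to J$,
\[
\det B_{IJ}^{a}(q)=\sum_{\sigma} \mathrm{sgn}(\sigma)\prod_{i\in I} B_{i,\sigma(i)}^{a}(q),
\]
every non-vanishing summand requires $u_{i}(a)u_{\sigma(i)}(a)=1$ for all $i\in I$. Taking the product over $i$ collapses the $\sigma$-dependence and gives
$\prod_{i\in I}u_{i}(a)\cdot\prod_{j\in J}u_{j}(a)=1$; raising both sides to the power $|I|=|J|$ yields exactly
$\prod_{i\in I}\prod_{j\in J}u_{i}(a)u_{j}(a)=1$. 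The contrapositive is the lemma.

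\medskip

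The main obstacle is really just the commutation step. Everything else reduces to mechanical bookkeeping once one has $a^{*}\pi_{q}^{-}=\pi_{q}^{-}a^{*}$ and the conjugate eigenvalue relation on $\mathfrak{M}_{q}^{-}$; the crucial observation is that unitarity of $a^{*}$, which comes for free from $a^{*}\omega_{q}=\omega_{q}$, is the right tool to transfer invariance of $\mathfrak{M}_{q}^{-}$ to invariance of $\mathrm{Ran}(\partial_{q}^{+})$ without ever differentiating.
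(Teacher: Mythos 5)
Your argument is correct. Note that the paper itself gives no proof of this statement: it is imported verbatim as Lemma 7.2 of \cite{ForniSurvey}, so the only comparison available is with Forni's original argument, which reaches the same entry-wise selection rule by a shorter route. There one simply observes that $a$ is an $\omega_q$-preserving diffeomorphism, so the change-of-variables formula gives
$B^a_{ij}(q)=\int_M m_i^+(a)\,m_j^+(a)\,\omega_q=\int_M a^*\bigl(m_i^+(a)\,m_j^+(a)\bigr)\,\omega_q=u_i(a)u_j(a)\,B^a_{ij}(q)$,
whence $B^a_{ij}(q)=0$ unless $u_i(a)u_j(a)=1$; no discussion of $\pi_q^-$ is needed because the integral already pairs against $\overline{m_j^+(a)}\in\mathfrak{M}_q^-$. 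Your detour through the identity $a^*\circ\pi_q^-=\pi_q^-\circ a^*$ is valid but longer; the one point you should make explicit is that $a^*(\mathfrak{M}_q^-)\subset\mathfrak{M}_q^-$ upgrades to equality (because $\mathfrak{M}_q^-$ is finite-dimensional and $a^*$ is injective), since a unitary operator mapping a closed subspace \emph{into} itself need not preserve the orthogonal complement in general. Your identification of $B^a_{ij}(q)$ as the coefficient of $\overline{m_j^+(a)}$ in $\pi_q^-(m_i^+(a))$, the conjugate eigenvalue relation via $|u_j(a)|=1$, and the final expansion of the minor over bijections $\sigma:I\to J$ followed by raising $\prod_{i\in I}u_i(a)\prod_{j\in J}u_j(a)=1$ to the power $\#I$ are all correct and give exactly the contrapositive of the stated implication.
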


\begin{corollary}\label{c1} If $\prod\limits_{i\in I}\prod\limits_{j\in J} u_i(a)u_j(a)\neq 1$
for all $I,J\subset\{1,\dots,g\}$ with $\#I=\#J=k$, then $\textrm{rank}(H_q)\leq g-k$, hence
$q\in \mathcal{R}_g^{(1)}(k)$.
\end{corollary}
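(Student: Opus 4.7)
The plan is to derive Corollary~\ref{c1} as a direct consequence of the preceding Lemma together with the standard characterization of the rank of a matrix via its minors. The hypothesis is tailored so that the Lemma applies to every pair of index sets $(I,J)$ of cardinality $k$, killing all corresponding minors of $B^a(q)$; the only non-immediate point is to translate a rank bound on $B^a(q)$ into the rank bound on $H_q$ claimed in the statement.

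First, I would invoke the preceding Lemma on each pair $(I,J)\subset\{1,\dots,g\}$ with $\#I=\#J=k$. The assumption $\prod_{i\in I}\prod_{j\in J}u_i(a)u_j(a)\neq 1$ is by hypothesis required to hold for \emph{every} such pair, so the Lemma simultaneously forces every $k\times k$ minor of $B^a(q)$ to vanish. By the elementary criterion that a matrix has rank at most $r$ iff every $(r+1)\times(r+1)$ minor vanishes, this yields the required bound on $\textrm{rank}\, B^a(q)$.

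Next, I would observe that $\textrm{rank}\, H_q=\textrm{rank}\, B^a(q)$. Indeed, $B^a(q)$ is by construction the matrix of $\pi_q^-\big|_{\mathfrak{M}_q^+}$ in the orthonormal bases $\{m_i^+(a)\}$ of $\mathfrak{M}_q^+$ and $\{\overline{m_i^+(a)}\}$ of $\mathfrak{M}_q^-$, and since $H_q(m,m)=\|\pi_q^-(m)\|_q^2$, the null space of $H_q$ coincides with the kernel of this restriction of $\pi_q^-$. Passing from the rank bound on $H_q$ to the defining condition $\Lambda_{k+1}(q)=\dots=\Lambda_g(q)=0$ of the locus $\mathcal{R}_g^{(1)}(k)$ is then immediate from the decreasing ordering of the eigenvalues $\Lambda_i(q)$.

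The main (mild) subtlety lies in the identification $\textrm{rank}\, H_q=\textrm{rank}\, B^a(q)$, because $\mathfrak{M}_q^-$ is naturally the complex conjugate of $\mathfrak{M}_q^+$ and one has to track the antilinearity carefully when expressing $H_q$ in terms of $B^a(q)$ (concretely, $H_q$ is given by $\overline{(B^a)^\ast B^a}$ in the chosen basis, so it shares the rank of $B^a(q)$). Once this bookkeeping is settled, the corollary is a one-line consequence of the Lemma, with no further analytic input required.
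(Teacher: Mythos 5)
Your argument follows the same route as the paper's one\-/line proof: apply the Lemma to annihilate the $k\times k$ minors of $B^a(q)$, and transfer the resulting rank bound to $H_q$ through the identity $H(q)=B^a(q)^*B^a(q)=\overline{B^a(q)}\,B^a(q)$ (the paper cites equation (44) of \cite{ForniSurvey} for this). The point you isolate as the only substantive one --- that $\textrm{rank}\,H_q=\textrm{rank}\,B^a(q)$, via symmetry of $B^a(q)$ and $\ker H_q=\ker\big(\pi_q^-|_{\mathfrak{M}_q^+}\big)$ --- is indeed exactly what the paper's proof rests on, and you handle it correctly. One bookkeeping caveat: the vanishing of \emph{all} $k\times k$ minors gives $\textrm{rank}\,B^a(q)\le k-1$, not $\le g-k$, so the phrase ``the required bound'' silently identifies two different numbers; these agree only when $k-1=g-k$. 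This mismatch originates in the statement itself rather than in your argument (note that $q\in\mathcal{R}_g^{(1)}(k)$ means $\textrm{rank}\,H_q\le k$, which \emph{does} follow from $\textrm{rank}\le k-1$, so the final membership claim is proved by your reasoning), and in the genus~$4$ application the authors in effect use the Lemma directly on the $1\times1$ minors ($B^a_{ij}=0$ unless $u_i(a)u_j(a)=1$) rather than the corollary as literally stated. In short: same method as the paper, correct proof of the conclusion $q\in\mathcal{R}_g^{(1)}(k)$, but --- like the paper's own one\-/liner --- it does not actually establish the intermediate inequality $\textrm{rank}\,H_q\le g-k$ except when $k-1\le g-k$.
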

\begin{proof} Since the matrix $H(q)$ of $H_q$ with respect to $\{m_1^+(a),\dots,m_g^+(a)\}$ satisfies $H(q)=B^a(q)^*B^a(q)=\overline{B^a(q)} B^a(q)$ (see equation (44) of~\cite{ForniSurvey}), the 
statement follows immediately from the previous lemma.
\end{proof}

At this point, we are ready to present our genus $4$ example and prove Theorem~\ref{t.FM}.

\section{The example}

Given an integer $N>1$ and a $4$-tuple of integers $\textbf{a}:=(a_1,\dots,a_4)$ with $0<a_{\mu}<N$,
$\text{gcd}(N, a_1, \dots,a_4) =1$  and $\sum\limits_{\mu=1}^4 a_{\mu}\equiv 0 \, (\textrm{mod } N)$.
Let $x_1,x_2,x_3,x_4\in \C$ be $4$ distinct points and let $M:=M_N(\bf{a})$ be the connected, non-singular Riemann surface determined by the algebraic equation:
$$w^N=(z-x_1)^{a_1}(z-x_2)^{a_2}(z-x_3)^{a_3}(z-x_4)^{a_4}\,.$$
The surface $M$ is a {\it cyclic cover } of the Riemann sphere $\Proj^1(\C)$ branched over the 
 points $x_1, \dots, x_4 \in \C \equiv \Proj^1(\C)\setminus\{\infty\}$. In fact, every such cyclic cover 
 (up to isomorphisms) can be written as above (see Example 4.2 in \cite{B}). The surface
 $M$ has genus 
$$
g=N+1- \frac{1}{2} \sum\limits_{\mu=1}^4\textrm{gcd}(a_\mu,N)\,.
$$
The surface $M$ is called a cyclic cover since its automorphism group is cyclic. In fact, it
is generated by the automorphism $T:M\to M$ given by
$$T(z,w)=(z,\varepsilon w) \,,$$
where $\varepsilon^N=1$ is a primitive $N$th root of unity.

For $i\in \{1, \dots, N-1\}$, let $L_i$ denote the eigenspace with eigenvalue $\epsilon^i$ for the
action of $T$ on holomorphic abelian differentials: 
$$
L_i:=\{\omega\in H^1_{\textrm{dR}}(M,\C): T^{*}\omega=\varepsilon^i\cdot\omega\}\,.
$$

\begin{lemma} [Bouw \cite{B}, Lemma 4.3]
\label{l.Bouw} The following formulas hold:
$$
\textrm{dim}_{\mathbb{C}}L_i=\sum\limits_{\mu=1}^4\langle \frac{i\cdot a_{\mu}}{N}\rangle \,\,-\,\,1
$$
(here $\langle\cdot \rangle$ denotes the fractional part).
\end{lemma}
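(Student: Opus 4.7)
The plan is to construct an explicit basis for $L_i$ via the ansatz $\omega = h(z)\, w^i\, dz$ with $h \in \C(z)$, and to count admissible $h$'s by local holomorphy analysis at the branch points and at infinity. Since $\C(M) = \C(z)[w]/(w^N - \prod_\mu (z-x_\mu)^{a_\mu}) \cong \bigoplus_{k=0}^{N-1}\C(z)\cdot w^k$ as $\C(z)$-modules, with $T^*$ acting by $\varepsilon^k$ on the $k$-th summand and $T^*(dz)=dz$, every meromorphic $\varepsilon^i$-eigenform on $M$ is of the indicated shape; holomorphy away from $\{x_1,\dots,x_4,\infty\}$ forces $h$ to be regular there.

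The heart of the argument is a local order computation at the branch locus. For each $\mu$, set $d_\mu := \gcd(a_\mu, N)$ and $e_\mu := N/d_\mu$; the point $x_\mu$ has $d_\mu$ preimages in $M$, each with ramification index $e_\mu$. At a preimage $P$ with local uniformizer $s$ one reads off $\textrm{ord}_P(z-x_\mu) = e_\mu$, $\textrm{ord}_P(w) = a_\mu/d_\mu$, and $\textrm{ord}_P(dz) = e_\mu - 1$. If $h$ has a pole of order $\nu_\mu$ at $x_\mu$, then
$$\textrm{ord}_P(\omega) = -\nu_\mu e_\mu + \frac{i\, a_\mu}{d_\mu} + e_\mu - 1,$$
and a short case analysis (on whether $N \mid i a_\mu$) converts $\textrm{ord}_P(\omega) \ge 0$ into the sharp bound $\nu_\mu \le \lceil i a_\mu/N \rceil$. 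Since $\sum_\mu a_\mu \equiv 0 \pmod N$ the cover is unramified at infinity, and writing $M_\infty := (\sum_\mu a_\mu)/N$ each of the $N$ preimages of $\infty$ satisfies $\textrm{ord}(w) = -M_\infty$, so holomorphy at infinity becomes $\deg h \le -i M_\infty - 2$.

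Writing $h = P(z)/\prod_\mu (z-x_\mu)^{\nu_\mu^{\max}}$ with $P \in \C[z]$, the constraints combine to $\deg P \le \sum_\mu \lceil i a_\mu/N \rceil - i M_\infty - 2$, so
$$\dim L_i = \sum_{\mu=1}^{4}\lceil i a_\mu/N \rceil - i M_\infty - 1.$$
The identity $\lceil x \rceil = x + \<-x\>$ combined with $\sum_\mu a_\mu = N M_\infty$ collapses this to $\sum_\mu \<-i a_\mu/N\> - 1 = \sum_\mu \<(N-i) a_\mu/N\> - 1$, which agrees with the stated formula after the involution $i \leftrightarrow N-i$ (corresponding to the choice of primitive $N$-th root of unity $\varepsilon$ versus $\varepsilon^{-1}$ when labeling the eigenspaces). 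The main technical obstacle is the careful local bookkeeping at the ramified points: one must see that the non-integer contributions from $i a_\mu/d_\mu$ and $e_\mu - 1$ combine to give the uniform bound $\lceil i a_\mu/N \rceil$ in both generic and boundary cases.
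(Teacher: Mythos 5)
The paper does not actually prove this lemma: it is imported verbatim from Bouw \cite{B}, Lemma 4.3, so there is no internal proof to compare against. Your argument is a correct and essentially self-contained proof by the standard direct method: since $\C(M)=\bigoplus_{k=0}^{N-1}\C(z)w^k$ with $T^*$ acting by $\varepsilon^k$ on the $k$-th summand, the $\varepsilon^i$-eigenforms are exactly $h(z)w^i\,dz$, and holomorphy translates into pole bounds on $h$. I checked the local bookkeeping: at a preimage of $x_\mu$ one has $\mathrm{ord}(z-x_\mu)=e_\mu$, $\mathrm{ord}(w)=a_\mu/d_\mu$, $\mathrm{ord}(dz)=e_\mu-1$, and since $\langle ia_\mu/N\rangle$ is a multiple of $1/e_\mu$ the integrality argument does yield the sharp bound $\nu_\mu\le\lceil ia_\mu/N\rceil$ in both cases; the count at infinity and the polynomial dimension count are also right. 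One small point you leave implicit: the count $\sum_\mu\lceil ia_\mu/N\rceil-iM_\infty-1$ is only the true dimension if the degree bound on $P$ is at least $-1$. This is where both standing hypotheses enter: $\sum_\mu a_\mu\equiv 0\ (\mathrm{mod}\ N)$ makes $\sum_\mu\langle -ia_\mu/N\rangle$ an integer, and $\gcd(N,a_1,\dots,a_4)=1$ rules out $N\mid ia_\mu$ for all $\mu$ when $0<i<N$, so that integer is $\ge 1$ and the formula is automatically nonnegative.

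The more interesting outcome of your computation is the $i\leftrightarrow N-i$ discrepancy, which is real and not quite explained by your parenthetical. With the paper's literal conventions ($T(z,w)=(z,\varepsilon w)$ and $L_i$ the $\varepsilon^i$-eigenspace of $T^*$ on holomorphic differentials) the correct answer is $\sum_\mu\langle (N-i)a_\mu/N\rangle-1$, i.e.\ the displayed formula computes $\dim L_{N-i}$. This cannot be repaired by choosing a different primitive root: replacing $\varepsilon$ by $\varepsilon^k$ replaces $T$ by $T^k$ and leaves every $L_i$ unchanged, so the mismatch is a genuine convention clash with Bouw (pull-back versus push-forward, or the labelling of characters), not an ambiguity in $\varepsilon$. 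One can confirm your version on the paper's own example $M_6(1,1,1,3)$: the differential $(z-x_4)^2\,dz/w^5$ is holomorphic and satisfies $T^*\omega=\varepsilon\omega$, so $\dim L_1\ne 0$, contradicting the formula as stated but agreeing with yours. The slip is harmless for the paper, since only the multiset of eigenvalues enters the argument and conjugating all the $u_j(T)$ preserves the condition $\prod u_iu_j\ne 1$; but your proof is what actually pins the conventions down.
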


We recall that the genus $3$ example in \cite{ForniSurvey} was given by the $SL(2,\R)$ orbit of  orientable  quadratic differentials (or, equivalently, of abelian differentials) 
constructed from cyclic covers of type $M_4(1,1,1,1)$.  

\begin{proof}  [Proof of Theorem \ref{t.FM}] 
Our example is given by an orientable holomorphic quadratic differential 
constructed from cyclic covers of type $M_6(1,1,1,3)$. 

Let $M$ be any branched cover of type $M_6(1,1,1,3)$. Observe that $M$  has genus $4$. Lemma~\ref{l.Bouw} yields: 
\begin{equation}
\begin{aligned}
\textrm{dim}_{\mathbb{C}}L_1&=\textrm{dim}_{\mathbb{C}}L_2=0 \,; \\
\textrm{dim}_{\mathbb{C}}L_3&=\textrm{dim}_{\mathbb{C}}L_4=1 \,;\\
 \textrm{dim}_{\mathbb{C}}L_5&=2\,.
 \end{aligned}
 \end{equation}
Thus, we can fix $\theta_1\in L_3$, $\theta_2\in L_4$ and $\theta_3,\theta_4\in L_5$ so that $\{\theta_1,\dots,\theta_4\}$ is a basis of the space of holomorphic differentials on $M$ and
the action  of $T^*$ on this basis is diagonal with eigenvalues $-1=\varepsilon^3$ (with multiplicity $1$), $\varepsilon^4$ (with multiplicity $1$) and $\varepsilon^5$ (with multiplicity $2$). Let  
$q=\theta_1^2$ be the orientable quadratic differential obtained as pull-back to $M$ of the unique holomorphic quadratic differential on $\Proj(\C)$ with simple poles at the branching points 
$\{x_1,\dots, x_4\}$. It follows that the spectrum of the action of $T\in \text{ \rm Aut}(q)$ on the space 
$\mathfrak{M}_q^+\subset L_q^2(M)$ is
$$u_1(T)=1\,, \quad u_2(T)=-\varepsilon^4\,, \quad u_3(T)=-\varepsilon^5\,,
 \quad u_4(T)=-\varepsilon^5\,.$$
Consequently, $q\in\mathcal{R}_4^{(1)}(1)$. In fact, a direct application of Corollary~\ref{c1} shows that the form $H_q$ has rank $1$, so that $q\in\mathcal{R}_4^{(1)}(1)$. 

Let $\Cal V$ be the set of all orientable quadratic differentials on a cyclic cover of type 
$M_6(1,1,1,3)$ branched over $4$ points $\{x_1, \dots, x_4\}\in \Proj(\C)$ obtained as
pull-back of the unique (non-orientable) holomorphic quadratic  differential on $\Proj(\C)$, 
of total area equal to $1$, with simple poles at  $x_1, \dots, x_4$. Since the stratum of all quadratic differentials on the Riemann sphere with $4$ simple poles is $SL(2,\R)$-invariant and consists of a single $SL(2,\R)$ orbits, the set $\Cal V$ consists of a single $SL(2,\R)$ orbit in the stratum of orientable quadratic differentials with $3$ distinct zeros of order $4$ (equivalently, of the stratum of squares of abelian differentials with $3$ distinct double zeros). The above computation shows that $\Cal V \subset \mathcal{R}_4^{(1)}(1)$, hence (in view of Corollary~\ref{c}) the proof of Theorem~\ref{t.FM} is completed.
\end{proof}

We conclude by the remark that our exploration of other cyclic covers has failed to
provide additional example of totally degenerate Kontsevich-Zorich spectra. We recall
that according to M\"oller \cite{Moeller}, there are no more examples of totally degenerate
Kontsevich-Zorich spectrum among Veech surfaces in any genus.

\end{document}